\newtheorem{theorem}{Theorem}[section]
\newtheorem{remark}[theorem]{Remark}
\newcommand{\N}{\mathbb{N}}
\newcommand{\PI}{\textrm{P}_\textrm{\small{I}}}
\newcommand{\PII}{\textrm{P}_\textrm{\small{II}}}
\newcommand{\PIII}{\textrm{P}_\textrm{\small{III}}}
\newcommand{\PIV}{\textrm{P}_\textrm{\small{IV}}}
\newcommand{\PV}{\textrm{P}_\textrm{\small{V}}}
\newcommand{\PVI}{\textrm{P}_\textrm{\small{VI}}}
\begin{document}
\title{The recurrence coefficients of semi-classical Laguerre polynomials and the fourth
 Painlev\'e equation}

\author{Galina Filipuk\footnotemark[1],\quad Walter Van Assche\footnotemark[2],
\quad Lun Zhang\footnotemark[2]}
\date{\today}

\maketitle
\renewcommand{\thefootnote}{\fnsymbol{footnote}}
\footnotetext[1]{Faculty of Mathematics, Informatics and Mechanics,
University of Warsaw, Banacha 2, Warsaw, 02-097, Poland. E-mail:
filipuk@mimuw.edu.pl} \footnotetext[2]{Department of Mathematics,
Katholieke Universiteit Leuven, Celestijnenlaan 200B, B-3001 Leuven,
Belgium. E-mail: \{Walter.VanAssche, lun.zhang\}@wis.kuleuven.be}

\begin{abstract}

We show  that the coefficients of the three-term recurrence relation
for orthogonal polynomials with respect to a semi-classical
extension of the Laguerre weight satisfy the fourth Painlev\'e
equation when viewed as functions of one of the parameters in the
weight. We compare different approaches to derive this result,
namely, the ladder operators approach, the isomonodromy deformations
approach and combining the Toda system for the recurrence
coefficients with a discrete equation. We also discuss a relation
between the recurrence coefficients for the Freud weight and the
semi-classical Laguerre weight and show how it arises from the
B\"acklund transformation of the fourth Painlev\'e equation.

\end{abstract}

\section{Introduction}

One of the most important properties of orthogonal polynomials is
the three-term recurrence relation. For a sequence $(p_n)_{n \in
\N}$ of orthonormal polynomials with respect to a positive measure
$\mu$ on the real line
\begin{equation}\label{eq: orthonormality}
\int p_n(x)p_k(x)\,d\mu(x)=\delta_{n,k},
\end{equation}
where $\delta_{n,k}$ is the Kronecker delta,
 this relation takes the following form:
\begin{equation}\label{3 term orthonormal}
 x p_n(x)=a_{n+1}p_{n+1}(x)+b_np_n(x)+a_np_{n-1}(x)
\end{equation}
with the recurrence coefficients given by the   following integrals
\begin{gather}\label{recu:orthonormal}
a_n=\int x p_n(x)p_{n-1}(x)\,d\mu(x),\qquad b_n=\int
xp_n^2(x)\,d\mu(x).
\end{gather}
Here the integration is over the support $S\subset\mathbb{R}$ of the
measure $\mu$ and it is assumed that $p_{-1}=0$.

One can also associate the monic orthogonal polynomials $P_n(x)$ of
degree $n$ in $x$ with the measure $\mu$, namely,
\begin{equation} \label{pn-formula}
P_n(x) = x^n + \textsf{p}_1(n) x^{n-1} + \cdots,
\end{equation}
such that
\begin{equation} \label{orthogonality monic}
\int P_m(x) P_n(x)\, d\mu (x) = h_n \delta_{m,n}, \quad h_n>0, \quad
m,n=0,1,2,\cdots.
\end{equation}
The three-term recurrence relation now reads
\begin{equation} \label{recurrence}
x P_n(x) = P_{n+1}(x) + \alpha_n P_n(x) + \beta_n P_{n-1}(x),
\end{equation}
where
\begin{equation}\label{recu:monic}
\alpha_n=\frac{1}{h_n}\int x P_n^2(x)\,d\mu(x),\qquad
\beta_n=\frac{1}{h_{n-1}}\int x P_n(x)P_{n-1}(x)\,d\mu(x),
\end{equation}
and the initial condition is taken to be $\beta_0 P_{-1}:=0$.

The recurrence coefficients can  be expressed in terms of
determinants containing the moments of the orthogonality measure
\cite{Chihara}. For classical orthogonal polynomials (Hermite,
Laguerre, Jacobi) one knows these recurrence coefficients explicitly
in contrast to non-classical weights.  A useful characterization of
classical polynomials is the Pearson equation
$$ [\sigma(x)w(x)]'=\tau(x)w(x), $$ where $\sigma $ and $\tau$ are polynomials
satisfying $\deg\sigma\leq 2$ and $\deg\tau= 1$, and
$d\mu(x)=w(x)dx$. Semi-classical orthogonal polynomials are defined
as orthogonal polynomials for which the weight function satisfies a
Pearson equation for which $\deg \sigma
> 2$ or $\deg \tau \neq 1$. See Hendriksen and van Rossum \cite{HvR}
and Maroni \cite{Mar}. The recurrence coefficients of semi-classical
weights obey nonlinear recurrence relations, which, in many cases,
can be identified as discrete Painlev\'e equations; see \cite{BV}
and the references therein.

In this paper we consider polynomials orthogonal on $\mathbb{R}^+$
with respect to the semi-classical Laguerre weight
\begin{equation}
\label{eq:weight} w(x)=w(x;t)=x^{\alpha}e^{-x^2+tx}, \qquad x \in
\mathbb{R}^+,
\end{equation}
with $\alpha>-1$ and $t\in\mathbb{R}$.
We show that the corresponding recurrence coefficients are related
to the fourth Painlev\'e equation $\PIV$ for the function $q=q(z)$,
which is given by
\begin{equation*}
q''=\frac{q'^2}{2q}+\frac{3q^3}{2}+4z q^2+2(z^2-A)q+\frac{B}{q}.
\eqno(P4)
\end{equation*}
The solutions of the fourth Painlev\'e equation have no movable
critical points. The fourth Painlev\'e equation is among the six
well-known Painlev\'e equations, whose solutions are often referred
to as nonlinear special functions due to many important applications
in mathematics and mathematical physics; cf.~\cite{Clarkson, GrLSh,
Noumi}.

We will apply different approaches to derive the fourth Painlev\'e
equation for the recurrence coefficients of the semi-classical
Laguerre polynomials. In particular, we shall use the ladder
operators approach, the isomonodromy deformations approach and the
Toda system for the recurrence coefficients combined with a discrete
equation derived in \cite{BV}. A similar comparison of the methods
is given in \cite{Forrester}, where the recurrence coefficients are
related to the solutions of the fifth Painlev\'e equation. Another
main objective of the paper is  to see how the properties of the
orthogonal polynomials are related to properties of transformations
of the Painlev\'e equation. In particular, by using the Toda system
we show that the discrete equation in \cite{BV} can be obtained from
a B{\"a}cklund transformation of the fourth Painlev\'e equation.
Finally we shall deal with recurrence coefficients associated with
the Freud weight and discuss their connections with the fourth
Painlev\'e equation.

\section{Derivation of the fourth Painlev\'e equation for the recurrence coefficients}

\subsection{The discrete equations and Toda system}\label{sec:Toda}
For the semi-classical Laguerre weight given in \eqref{eq:weight},
we have the following discrete equations for the recurrence
coefficients.
\begin{theorem}\label{thm:recurrence}\cite{BV}
The recurrence coefficients $a_n$ and $b_n$ in the three-term
recurrence relation (\ref{3 term orthonormal}) associated with the
weight \eqref{eq:weight} are given by $2a_n^2 = y_n+n+\alpha/2$ and
$2b_n = t-\sqrt{2}/x_n$, where $(x_n,y_n)$ satisfy
\begin{equation}\label{eq:recurrence}
  \begin{cases}
   x_n x_{n-1}=\displaystyle\frac{y_n+z_n}{y_n^2-\alpha^2/4},  \\
   y_n+y_{n+1}=\displaystyle\frac{1}{x_n}\left(\frac{t}{\sqrt{2}}-\frac{1}{x_n}\right),
  \end{cases}
\end{equation}
and $z_n=n+\alpha/2$.
\end{theorem}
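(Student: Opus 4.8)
The plan is to use the ladder-operator (Chen--Ismail) formalism for the monic orthogonal polynomials $P_n$ associated with the weight \eqref{eq:weight}, and to translate the resulting compatibility conditions into the stated system. Writing $w=e^{-v}$ with $v(x)=x^2-tx-\alpha\ln x$, the first step is the logarithmic-derivative identity $v'(x)=2x-t-\alpha/x$, together with the key observation that the divided difference collapses to
\begin{equation*}
\frac{v'(x)-v'(s)}{x-s}=2+\frac{\alpha}{xs}.
\end{equation*}
Feeding this into the standard integral representations of the ladder coefficients gives $A_n(x)=2+\alpha R_n/x$ and $B_n(x)=\alpha r_n/x$, where the only auxiliary quantities are
\begin{equation*}
R_n=\frac{1}{h_n}\int_0^\infty\frac{P_n^2(s)}{s}\,w(s)\,ds,\qquad r_n=\frac{1}{h_{n-1}}\int_0^\infty\frac{P_n(s)P_{n-1}(s)}{s}\,w(s)\,ds.
\end{equation*}

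Next I would impose the two compatibility conditions, namely $(S_1)$: $B_{n+1}+B_n=(x-\alpha_n)A_n-v'$, and the combined relation $(S_2')$: $B_n^2+v'B_n+\sum_{j=0}^{n-1}A_j=\beta_nA_nA_{n-1}$, and simply match coefficients of the powers of $x$. Because $A_n$, $B_n$, $v'$ are at most linear in $x$ and in $1/x$, each condition yields two scalar relations. From $(S_1)$ the constant term gives $2\alpha_n=t+\alpha R_n$ and the $1/x$ term gives $r_n+r_{n+1}=1-\alpha_nR_n$; from $(S_2')$ the constant term gives $2\beta_n=n+\alpha r_n$ and the $1/x^2$ term gives the genuinely nonlinear identity $r_n(r_n-1)=\beta_nR_nR_{n-1}$. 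These four relations are the entire content needed.

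It then remains to recognise the change of variables. Since for orthonormal polynomials $b_n=\alpha_n$ and $a_n^2=\beta_n$, the relation $2\alpha_n=t+\alpha R_n$ is exactly $2b_n=t-\sqrt2/x_n$ once we set $\alpha R_n=-\sqrt2/x_n$, while $2\beta_n=n+\alpha r_n$ is exactly $2a_n^2=y_n+n+\alpha/2$ once we set $\alpha r_n=y_n+\alpha/2$, with $z_n=n+\alpha/2$ as claimed. Substituting $\alpha R_n=-\sqrt2/x_n$ and $\alpha r_n=y_n+\alpha/2$ into $r_n+r_{n+1}=1-\alpha_nR_n$ reproduces the second equation of \eqref{eq:recurrence}, and substituting them into $r_n(r_n-1)=\beta_nR_nR_{n-1}$ (using $\beta_n=\tfrac12(y_n+z_n)$ and $\alpha^2r_n(r_n-1)=y_n^2-\alpha^2/4$) reproduces the first; both reductions are routine algebra.

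The main obstacle I anticipate is not the algebra but the analytic justification of the ladder relations. The integrals defining $R_n$ and $r_n$ converge only for $\alpha>0$, since near the origin the integrand behaves like $s^{\alpha-1}$; equivalently, the boundary terms in the integration by parts that produces $A_n$, $B_n$ vanish only when $\alpha>0$. To cover the whole range $\alpha>-1$ I would instead work with the polynomial form of the Pearson equation, $[x\,w(x)]'=\bigl((\alpha+1)-2x^2+tx\bigr)w(x)$, so that all integrations by parts are carried out against $x\,w(x)=x^{\alpha+1}e^{-x^2+tx}$, which decays at both endpoints for every $\alpha>-1$; the same four relations then follow with the regularised auxiliary quantities, or alternatively one establishes them for $\alpha>0$ and extends by analyticity in $\alpha$. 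Beyond this, the only step carrying real weight is the quadratic identity from $(S_2')$, which supplies the single nonlinear input that closes the system \eqref{eq:recurrence}.
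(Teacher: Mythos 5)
Your proposal is correct and follows essentially the same route as the paper's own (alternative) proof in Section \ref{subsec:analysis of lad}: it derives the relations (\ref{cond1}), (\ref{cond2}), (\ref{cond5}), (\ref{cond7}) from the compatibility conditions $(S_1)$ and $(S_2')$ and then translates them into the system (\ref{eq:recurrence}) via $R_n=-\sqrt2/x_n$ and $r_n=y_n+\alpha/2$ (your $R_n$, $r_n$ differ from the paper's only by an absorbed factor of $\alpha$). Your closing remark on justifying the ladder relations for all $\alpha>-1$ goes slightly beyond the paper, which restricts to $\alpha>0$ at this point, but the substance of the argument is identical.
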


It is shown in \cite{BV} that the system (\ref{eq:recurrence}) can
be obtained from an asymmetric Painlev\'e d$\PIV$ equation by a
limiting process. The proof is based on a Lax pair for the
associated orthogonal polynomials.

If  the positive measure  is given by $\exp(tx)\,d\mu(x)$ on the
real line, where $t$ is a real parameter (assuming   that the
moments exist for all $t\in \mathbb{R}$), then  the coefficients of
the orthogonal polynomials depend on $t$ and satisfy the Toda system
\cite{Moser}, \cite[\textsection 2.8, p. 41]{Ismail}
 (see also \cite{our} for more details).

\begin{theorem}\label{proposition:Toda}
The recurrence coefficients  $a_n(t)$ and $b_n(t)$ of monic
polynomials which are orthogonal  with respect to
$\exp(tx)\,d\mu(x)$ on the real line  satisfy the Toda  system
\begin{equation}\label{Toda}
\begin{cases}
    (a_n^2)' = a_n^2 (b_n-b_{n-1})   \\
     b_n' = a_{n+1}^2 -a_n^2.
\end{cases}
\end{equation}
The  initial conditions $a_n(0)$ and $b_n(0)$ correspond to the
recurrence coefficients of the orthogonal polynomials for the
measure $\mu$.
\end{theorem}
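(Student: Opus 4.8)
The plan is to differentiate the orthogonality relations for the monic polynomials $P_n(x;t)$ with respect to $t$ and to exploit the fact that $\partial_t P_n$ is a polynomial of degree at most $n-1$. Throughout I would use the standard identifications between the monic and orthonormal recurrence coefficients, namely $a_n^2 = \beta_n = h_n/h_{n-1}$ and $b_n = \alpha_n$, together with the relation $\alpha_n = \textsf{p}_1(n) - \textsf{p}_1(n+1)$, which follows by matching the coefficient of $x^n$ on the two sides of the recurrence \eqref{recurrence}.

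First I would record the elementary fact that the moments $\mu_k(t) = \int x^k e^{tx}\,d\mu$ satisfy $\mu_k' = \mu_{k+1}$, so that differentiation in $t$ amounts to inserting a factor of $x$ inside the integral. Differentiating the normalization $\int P_n^2\, e^{tx}\,d\mu = h_n$ gives $2\int (\partial_t P_n) P_n\, e^{tx}\,d\mu + \int x P_n^2\, e^{tx}\,d\mu = h_n'$; the first integral vanishes because $\partial_t P_n$ has degree at most $n-1$ and is therefore orthogonal to $P_n$, while the second equals $\alpha_n h_n$ by \eqref{recu:monic}. Hence $(\log h_n)' = \alpha_n = b_n$, and subtracting the same identity for $h_{n-1}$ yields $(\log a_n^2)' = b_n - b_{n-1}$, which is the first equation of \eqref{Toda}.

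For the second equation I would differentiate the off-diagonal relation $\int P_n P_{n-1}\, e^{tx}\,d\mu = 0$. The term involving $\partial_t P_{n-1}$ drops out for the same degree reason, the term $\int x P_n P_{n-1}\, e^{tx}\,d\mu$ equals $\beta_n h_{n-1}$ by \eqref{recu:monic}, and the remaining term $\int (\partial_t P_n) P_{n-1}\, e^{tx}\,d\mu$ equals $\textsf{p}_1'(n)\, h_{n-1}$, since the coefficient of $P_{n-1}$ in the orthogonal expansion of the degree-$(n-1)$ polynomial $\partial_t P_n$ is exactly its leading coefficient $\textsf{p}_1'(n)$. This produces $\textsf{p}_1'(n) = -\beta_n$. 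Differentiating $\alpha_n = \textsf{p}_1(n) - \textsf{p}_1(n+1)$ and substituting then gives $b_n' = \alpha_n' = -\beta_n + \beta_{n+1} = a_{n+1}^2 - a_n^2$, the second equation of \eqref{Toda}. The assertion about initial conditions is immediate, since at $t=0$ the weight reduces to $d\mu$.

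All of these computations are routine once the framework is in place; there is no analytic difficulty, as everything reduces to degree counting against the orthogonality relations. The only point requiring care is the bookkeeping of leading coefficients, specifically the identification of $\int (\partial_t P_n) P_{n-1}\, e^{tx}\,d\mu$ with $\textsf{p}_1'(n)\, h_{n-1}$ and the correct sign in $\alpha_n = \textsf{p}_1(n) - \textsf{p}_1(n+1)$. I expect the main (and only minor) obstacle to be keeping these coefficient identities and signs consistent so that the two flows come out in the stated form.
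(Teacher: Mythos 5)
Your proof is correct and is essentially the standard argument: the paper itself does not prove this theorem but defers to the cited references (Moser, Ismail's book \S 2.8, and \cite{our}), where exactly this computation --- differentiating $\int P_n^2\,e^{tx}\,d\mu=h_n$ and $\int P_nP_{n-1}\,e^{tx}\,d\mu=0$ in $t$, using $\deg \partial_t P_n\le n-1$, and tracking the subleading coefficient via $\alpha_n=\textsf{p}_1(n)-\textsf{p}_1(n+1)$ and $\textsf{p}_1'(n)=-\beta_n$ --- is carried out. Your identifications $a_n^2=\beta_n=h_n/h_{n-1}$, $b_n=\alpha_n$ and all signs check out, so nothing further is needed.
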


In what follows, we shall use the systems (\ref{eq:recurrence}) and
(\ref{Toda}) to derive the fourth Painlev\'e equation $\PIV$. Later
on we show that system (\ref{eq:recurrence}) can be obtained from a
B{\"a}cklund transformation of $\PIV$.

By substituting the expressions for $a_n=a_n(t)$ and $b_n=b_n(t)$ in
terms of $x_n=x_n(t)$ and $y_n=y_n(t)$ into the system (\ref{Toda}),
we can find, using the system (\ref{eq:recurrence}), that
\begin{align}
\label{y0}y_{n+1}=\frac{x_n^2y_n+\sqrt{2}x_n'}{x_n^2}
,\;\;x_{n-1}=-\frac{2(2y_n+2n+\alpha)}{x_n(\alpha^2-4y_n^2)},\;\;y_n=\frac{\sqrt{2}t
x_n-2\sqrt{2}x_n'-2}{4x_n^2},\end{align} where $'$ denotes the
differentiation $d/dt$. As a result we obtain a second order
nonlinear differential equation for the function $x_n$:
$$x_n''=\frac{3}{2}\frac{x_n'^2}{x_n}+\frac{1}{4}\alpha^2
x_n^3-\frac{x_n}{8}(t^2-4-8n-4\alpha)+\frac{t}{\sqrt{2}}-\frac{3}{4x_n}.$$
Substituting
\begin{equation}\label{change}
x_n=-\frac{\sqrt{2}}{q(z)},\qquad t=2z,
\end{equation}
we get the fourth Painlev\'e
equation $\PIV$ 
for the function $q=q(z)$ with parameters
\begin{equation}\label{parameters}A=1+2n+\alpha,\qquad B=-2\alpha^2.\end{equation}

\subsection{Ladder operators approach: preliminaries}

The ladder operators for orthogonal polynomials have been derived by
many authors with a long history, we refer to
\cite{BonanC,BonanLN,Bonan,ci2,sh} and the references therein for a
quick guide. Nowadays the ladder operators approach   has been
successfully applied to show the connections of the Painlev\'e
equations and recurrence coefficients of certain orthogonal
polynomials; cf. \cite{chen+dai,chen+zhang,Dai+Zhang}.

Assume that the weight function $w$ vanishes at the endpoints of the
orthogonality interval. Following the general set-up (cf.
\cite{ci2}), the lowering and raising ladder operators for monic
polynomials $P_{n}(x)$ in (\ref{orthogonality monic}) are given by
\begin{align}
\left( \frac{d}{dx} + B_n(x) \right) P_n(x) & = \beta_n A_n(x)
P_{n-1}(x), \label{ladder1} \\ \left( \frac{d}{dx} - B_n(x) -
\textsf{v}'(x) \right) P_{n-1}(x) & = - A_{n-1}(x) P_n(x)
\label{ladder2}
\end{align}
with $$\textsf{v}(x):=-\ln w(x)$$ and  \begin{align} A_n(x) & :=
\frac{1}{h_n} \int \frac{\textsf{v}'(x) - \textsf{v}'(y)}{x-y} \
[P_n(y)]^2 w(y) dy, \label{an-def}\\ B_n(x) & := \frac{1}{h_{n-1}}
\int \frac{\textsf{v}'(x) - \textsf{v}'(y)}{x-y} \ P_{n-1}(y) P_n(y)
w(y) dy. \label{bn-def}
\end{align}
Note that $A_n(x)$ and $B_n(x)$ are not independent, but satisfy the
following supplementary conditions \cite[Lemma 3.2.2 and Theorem
3.2.4]{Ismail}.
\begin{theorem}
The functions $A_n(x)$ and $B_n(x)$ defined  by (\ref{an-def}) and
(\ref{bn-def}) satisfy 
$$
B_{n+1}(z) + B_n(z)  = (z- \alpha_n) A_n(z) -
\textup{\textsf{v}}\,'(z), \eqno(S_1)
$$
$$
1+ (z- \alpha_n) [B_{n+1}(z) - B_n(z)] = \beta_{n+1} A_{n+1}(z) -
\beta_n A_{n-1}(z).\eqno(S_2)
$$
\end{theorem}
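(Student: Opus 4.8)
The plan is to prove both supplementary conditions directly from the integral definitions \eqref{an-def}--\eqref{bn-def}, using three ingredients repeatedly: the three-term recurrence \eqref{recurrence} in the form $\beta_n P_{n-1}(y)=(y-\alpha_n)P_n(y)-P_{n+1}(y)$, the norm relation $h_n=\beta_n h_{n-1}$ (which follows from \eqref{recu:monic} and the recurrence), and the hypothesis that $w$ vanishes at the endpoints, so that boundary terms in any integration by parts drop out. Throughout I abbreviate the kernel as $K(z,y)=\bigl(\textsf{v}'(z)-\textsf{v}'(y)\bigr)/(z-y)$ and use the two elementary identities $K(z,y)(z-y)=\textsf{v}'(z)-\textsf{v}'(y)$ and $\textsf{v}'(y)w(y)=-w'(y)$, the latter because $\textsf{v}=-\ln w$.

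For $(S_1)$, first I would rewrite $B_n(z)$ via $h_{n-1}^{-1}=\beta_n h_n^{-1}$ and the recurrence, giving $B_n(z)=h_n^{-1}\int K(z,y)\bigl[(y-\alpha_n)P_n(y)-P_{n+1}(y)\bigr]P_n(y)w(y)\,dy$. Adding this to $B_{n+1}(z)$, the cross terms in $P_nP_{n+1}$ cancel and only $(y-\alpha_n)P_n^2$ survives. Splitting $(y-\alpha_n)=(z-\alpha_n)-(z-y)$ produces $(z-\alpha_n)A_n(z)$ from the first piece, while the second collapses through $K(z,y)(z-y)=\textsf{v}'(z)-\textsf{v}'(y)$ to $\textsf{v}'(z)-h_n^{-1}\int \textsf{v}'(y)P_n^2 w\,dy$. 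The last integral vanishes after one integration by parts, since $\textsf{v}'(y)w=-w'$ and $\int P_n^2 w'\,dy=-\int 2P_nP_n' w\,dy=0$ by orthogonality (because $\deg P_n'<n$). This yields exactly $(S_1)$.

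For $(S_2)$ I would proceed in the same spirit but track one more order. Writing $\beta_{n+1}A_{n+1}$ and $\beta_n A_{n-1}$ with the norm relation and then substituting the recurrence for one factor of $P_{n+1}$ (resp. for $\beta_nP_{n-1}$), the contributions in $P_{n-1}P_{n+1}$ cancel between the two terms, leaving $h_n^{-1}\int K(z,y)(y-\alpha_n)P_nP_{n+1}w\,dy - h_{n-1}^{-1}\int K(z,y)(y-\alpha_n)P_{n-1}P_nw\,dy$. Splitting $(y-\alpha_n)=(z-\alpha_n)-(z-y)$ again recognizes $(z-\alpha_n)\bigl[B_{n+1}(z)-B_n(z)\bigr]$ and reduces the rest, after the $\textsf{v}'(z)$ pieces vanish by orthogonality, to $h_n^{-1}\int \textsf{v}'(y)P_nP_{n+1}w\,dy-h_{n-1}^{-1}\int \textsf{v}'(y)P_{n-1}P_nw\,dy$.

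The main obstacle, and the only genuinely non-routine step, is evaluating these two residual integrals, which is where the constant $1$ in $(S_2)$ originates. Using $\textsf{v}'w=-w'$ and integrating by parts (boundary terms vanish), $\int P_nP_{n+1}w'\,dy=-\int(P_nP_{n+1})'w\,dy=-(n+1)h_n$, since $\int P_n'P_{n+1}w\,dy=0$ and $\int P_nP_{n+1}'w\,dy=(n+1)h_n$ from the leading-coefficient expansion $P_{n+1}'=(n+1)P_n+(\text{lower degree})$; hence $h_n^{-1}\int \textsf{v}'(y)P_nP_{n+1}w\,dy=n+1$. The analogous computation gives $h_{n-1}^{-1}\int \textsf{v}'(y)P_{n-1}P_nw\,dy=n$, and the difference $(n+1)-n=1$ supplies the required constant. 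Assembling everything gives $\beta_{n+1}A_{n+1}-\beta_nA_{n-1}=(z-\alpha_n)[B_{n+1}-B_n]+1$, which is $(S_2)$.
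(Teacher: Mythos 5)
Your proof is correct. The paper does not actually prove this theorem itself---it quotes it from Ismail's book (Lemma 3.2.2 and Theorem 3.2.4)---and your argument (rewriting one factor via the three-term recurrence and $h_n=\beta_n h_{n-1}$ so the cross terms cancel, splitting $(y-\alpha_n)=(z-\alpha_n)-(z-y)$, and killing or evaluating the residual integrals by integration by parts with $\textsf{v}'(y)w(y)=-w'(y)$) is precisely the standard derivation given in that reference, including the correct identification of the constant $1$ in $(S_2)$ as $(n+1)-n$ coming from the leading coefficients of $P_{n+1}'$ and $P_n'$.
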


From $(S_1)$ and $(S_2)$, we can derive another identity involving
$\sum_{j=0}^{n-1}A_j$ which is often  helpful:
\begin{equation*}
B_n^2(x) + \textsf{v}\,'(x) B_n(x) + \sum_{j=0}^{n-1}A_j(x) =
\beta_{n} A_n(x) A_{n-1}(x). \eqno(S_2')
\end{equation*}
The conditions $S_1$, $S_2$ and $S_2'$ are usually called the
compatibility conditions for the ladder operators.

\subsection{Analysis of the ladder operators for the semi-classical
Laguerre polynomials} \label{subsec:analysis of lad}

In this section we shall apply the general set-up of the ladder
operators to the polynomials orthogonal with respect to the weight
(\ref{eq:weight}).

For the weight function given in (\ref{eq:weight}), we have
$$\textsf{v}(x)=-\ln w(x)=-\alpha \ln x+x^2-t x ,$$ hence,
$$\frac{\textsf{v}\,'(x)-\textsf{v}\,'(y)}{x-y}=2+\frac{\alpha}{x
y}.$$ It then follows from (\ref{an-def}) and (\ref{bn-def}) that,
if $\alpha>0$,
\begin{equation}\label{eq:an bn}
A_n(x)=2+\frac{R_n}{x}, \quad B_n(x)=\frac{r_n}{x},
\end{equation}
where
\begin{equation}\label{eq:Rn}
R_n=\frac{\alpha}{h_n}\int_0^{\infty}P_n(y)^2
y^{\alpha-1}e^{-y^2+ty}\, dy
\end{equation} and
\begin{equation}
r_n=\frac{\alpha}{h_{n-1}}\int_0^{\infty}P_{n-1}(y)P_n(y)y^{\alpha-1}e^{-y^2+ty}\,
dy.
\end{equation}

Substituting (\ref{eq:an bn}) into $(S_1)$ and comparing the
coefficients of $x^0$ and $x^{-1}$, we have
\begin{align}\label{cond1}
R_n-2\alpha_n+t&=0,
\\
\label{cond2} r_n+r_{n+1}&=\alpha-\alpha_n R_n.
\end{align}
From $(S_2)$ we similarly get two more conditions:
\begin{align}\label{cond3}
1+r_{n+1}-r_n&=2(\beta_{n+1}-\beta_n),
\\
\label{cond4} \alpha_n(r_{n}-r_{n+1})&=\beta_{n+1}R_{n+1}-\beta_n
R_{n-1}.
\end{align}
Finally, relation $(S_2')$ gives us
\begin{align}
\label{cond5} r_n+n &=2\beta_n,
\\
\label{cond6} \sum_{j=0}^{n-1}R_j-tr_n&=2\beta_n(R_{n-1}+R_n),
\\
\label{cond7} r_n^2-\alpha r_n&=\beta_n R_{n-1} R_n .
\end{align}
In particular, it follows from (\ref{cond1}) and (\ref{cond5}) that
\begin{equation}\label{eq:alphan betan}
\alpha_n=\frac{R_n+t}{2},\qquad \beta_n=\frac{r_n+n}{2}.
\end{equation}
It is clear that (\ref{cond3}) is automatically satisfied using
(\ref{eq:alphan betan}).

In Section \ref{sec:Toda}, we showed that
\begin{equation}\label{def:w}
W(t):=q(t/2)=-\frac{\sqrt{2}}{x_n(t)}
\end{equation}
is a solution of the fourth Painlev\'{e} equation with certain
values of the parameters.

Our first objective in this section is to prove
\begin{theorem}If $\alpha>0$, we have
\begin{equation}\label{eq:w=Rn}
W(t)=R_n=\alpha\int_0^{\infty}p_n^2(y)y^{\alpha-1}e^{-y^2+ty} dy,
\end{equation}
where $p_n$ is the orthonormal polynomial associated with
\eqref{eq:weight}.
\end{theorem}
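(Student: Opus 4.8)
The statement has two parts, of quite different depth. The second equality, $R_n=\alpha\int_0^\infty p_n^2(y)\,y^{\alpha-1}e^{-y^2+ty}\,dy$, is nothing more than a rewriting of the definition (\ref{eq:Rn}) in terms of the orthonormal polynomials: since $p_n=P_n/\sqrt{h_n}$ we have $p_n^2=P_n^2/h_n$, so integrating against $p_n^2$ reproduces $\frac{\alpha}{h_n}\int_0^\infty P_n^2(y)y^{\alpha-1}e^{-y^2+ty}\,dy=R_n$. Hence the whole task reduces to establishing the first equality $W(t)=R_n$, equivalently $R_n=-\sqrt 2/x_n(t)$ in view of (\ref{def:w}).

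The plan is to compute the single monic recurrence coefficient $\alpha_n$ in two independent ways and compare. First I would record the standard dictionary between the monic coefficients $(\alpha_n,\beta_n)$ of (\ref{recurrence}) and the orthonormal coefficients $(a_n,b_n)$ of (\ref{3 term orthonormal}): from $P_n=\sqrt{h_n}\,p_n$ together with $a_n^2=h_n/h_{n-1}$ one gets $\alpha_n=b_n$ and $\beta_n=a_n^2$. On one side, the ladder-operator analysis of this section already supplies, through (\ref{eq:alphan betan}), the formula $\alpha_n=(R_n+t)/2$. On the other side, Theorem \ref{thm:recurrence} gives $2b_n=t-\sqrt 2/x_n$.

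Combining these yields the chain $R_n=2\alpha_n-t=2b_n-t=(t-\sqrt 2/x_n)-t=-\sqrt 2/x_n=W(t)$, which is exactly the claimed identity. So the argument is in essence a matching of the two expressions for $\alpha_n$ produced by the two approaches developed earlier in the paper; no hard analytic estimate or new differential-equation computation is needed once (\ref{eq:alphan betan}) and Theorem \ref{thm:recurrence} are in hand.

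The point that demands care, rather than a genuine obstacle, is self-consistency: I must make sure that the ladder-operator relation (\ref{eq:alphan betan}) and the discrete/Toda relations of Theorem \ref{thm:recurrence} genuinely refer to the same family of orthogonal polynomials and the same parameters, so that $b_n$ in one and $\alpha_n=b_n$ in the other may legitimately be equated. This is precisely where the hypothesis $\alpha>0$ enters: the closed forms (\ref{eq:an bn}) for $A_n,B_n$, and therefore the compatibility relations (\ref{cond1})--(\ref{cond7}) and the consequence (\ref{eq:alphan betan}), rely on the convergence at the origin of the integral (\ref{eq:Rn}) with its factor $y^{\alpha-1}$, which forces $\alpha>0$. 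Once this bookkeeping is checked, the identification $W(t)=R_n$ drops out immediately from the displayed chain of equalities.
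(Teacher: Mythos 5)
Your proposal is correct and follows essentially the same route as the paper: both arguments hinge on identifying the monic coefficient $\alpha_n$ with the orthonormal coefficient $b_n$ (via $P_n=\sqrt{h_n}\,p_n$), then equating the ladder-operator expression $\alpha_n=(R_n+t)/2$ from (\ref{eq:alphan betan}) with $2b_n=t-\sqrt{2}/x_n$ from Theorem \ref{thm:recurrence} to get $R_n=-\sqrt{2}/x_n=W(t)$. The only cosmetic difference is that the paper verifies $\alpha_n=b_n$ by writing out both integrals explicitly, whereas you invoke the standard monic/orthonormal dictionary.
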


\begin{proof}
With the monic polynomials $P_n$ and orthonormal polynomials $p_n$
defined in \eqref{orthogonality monic} and \eqref{eq:
orthonormality}, respectively, it is easily seen that
$P_n(x)=\sqrt{h_n}p_n(x)$. Hence, by \eqref{recu:orthonormal} and
\eqref{recu:monic}, it follows
\begin{align*}
\alpha_n&=\frac{1}{h_n}\int_0^{\infty}P_n(y)^2y^{\alpha+1}e^{-y^2+ty}\,dy
=\int_0^{\infty}p_n(y)^2y^{\alpha+1}e^{-y^2+ty}\,dy=b_n.
\end{align*}
This, together with the fact that
$$b_n=\frac{1}{2}\left(t-\frac{\sqrt{2}}{x_n}\right)$$ (see Theorem
\ref{thm:recurrence}) and the first equality in (\ref{eq:alphan
betan}), implies
\begin{equation}\label{eq:Rn in xn}
R_n=-\frac{\sqrt{2}}{x_n}.
\end{equation}
The formula (\ref{eq:w=Rn}) is now immediate in view of (\ref{eq:Rn
in xn}), (\ref{def:w}) and (\ref{eq:Rn}).
\end{proof} 

Next we give an alternative proof of Theorem \ref{thm:recurrence}
with the aid of relations established in
\eqref{cond1}--\eqref{cond7}. This, combining with the arguments in
Section \ref{sec:Toda}, will lead to another derivation of the
fourth Painlev\'{e} equation for the recurrence coefficients.

We first show that
\begin{equation}\label{eq:yn in betan}
y_n=2\beta_n-n-\alpha/2.
\end{equation}
To see this, we observe that
\begin{align*}
\beta_n&=\frac{1}{h_{n-1}}\int_0^{\infty}P_{n-1}(y)P_{n}(y)y^{\alpha+1}e^{-y^2+ty}dy\\
&=\sqrt{\frac{h_{n}}{h_{n-1}}}\int_0^{\infty}p_{n-1}(y)p_{n}(y)y^{\alpha+1}e^{-y^2+ty}dy
=\sqrt{\frac{h_{n}}{h_{n-1}}}a_n.
\end{align*}
Therefore, $$\beta_n^2=\frac{h_n}{h_{n-1}}a_n^2.$$ Since it is
easily seen that $\beta_n=h_n/h_{n-1},$ it then follows that
$\beta_n=a_n^2.$ Combing this with the fact that
$y_n=2a_n^2-n-\alpha/2$ gives us (\ref{eq:yn in betan}).

Replacing $x_{n-1}$, $x_{n}$ and $y_n$ in the first equation in
(\ref{eq:recurrence}) by $R_{n-1}$, $R_{n}$ and $\beta_n$ with the
aid of (\ref{eq:Rn in xn}) and (\ref{eq:yn in betan}), it is
equivalent to show that
\begin{equation}\label{eq:first equ}
\beta_nR_{n-1}R_{n}=\left(2\beta_n-n-\frac{\alpha}{2}\right)^2-\frac{\alpha^2}{4}.
\end{equation}
On account of (\ref{cond7}), it is essential to prove
\begin{equation}
\left(2\beta_n-n-\frac{\alpha}{2}\right)^2-\frac{\alpha^2}{4}=r_n^2-\alpha
r_n,
\end{equation}
which is immediate by (\ref{cond5}).

To show the second equation in (\ref{eq:recurrence}), we note that,
again with the aid of (\ref{eq:Rn in xn}) and (\ref{eq:yn in
betan}), it suffices to show that
\begin{equation}\label{eq:sec eq}
2(\beta_n+\beta_{n+1})-2n-1-\alpha=-\frac{1}{2}R_n(t+R_n).
\end{equation}
From (\ref{cond5}) and (\ref{cond2}), it follows that
\begin{equation}
2(\beta_n+\beta_{n+1})-2n-1-\alpha=r_n+r_{n+1}-\alpha=-\alpha_nR_n.
\end{equation}
This, together with (\ref{cond1}), implies (\ref{eq:sec eq}).

\subsection{Isomonodromy deformations approach}

Another easy method to show the connection of the recurrence
coefficients of the semi-classical Laguerre polynomials to the
solutions of the fourth Painlev\'e equation is by using the
isomonodromy deformations approach. The fourth Painlev\'e equation
appears as the result of the compatibility condition $Y_{xt}=Y_{tx}$
of two linear $2\times 2$ systems $Y_x=A(x)Y$ and $Y_t=B(x)Y,$ where
the subscript denotes the partial derivative \cite{JimboMiwa}. Here
\begin{equation}
\begin{aligned}\label{IsoMatix}A(x)= &~\begin{pmatrix} 1 & 0 \\ 0 & -1
\end{pmatrix} x+  \begin{pmatrix}
T & U \\ 2(Z-\theta_0-\theta_{\infty})/U & -T
\end{pmatrix} \\
&+\frac{1}{x} \begin{pmatrix}
 -Z+\theta_0 & -U V/2 \\
2Z(Z-2\theta_0)/(U V) & Z-\theta_0
\end{pmatrix} ,
\end{aligned}\end{equation} where $V=V(T),\;U=U(T),\;Z=Z(T)$ and $V(T)$
satisfies the fourth Painlev\'e equation $(P4)$ with
\begin{equation}\label{IsoPar}
A=2\theta_{\infty}-1,\;\;B=-8\theta_0^2.
\end{equation}
Substituting  (\ref{eq:an bn})  into (\ref{ladder2})  and
(\ref{ladder1}) (in this order) we get the following linear system:
\begin{equation}
\frac{d}{dx}\begin{pmatrix} p_{n-1}(x) \\ p_n(x)
\end{pmatrix} =\left(\begin{pmatrix}
2& 0 \\ 0 & 0
\end{pmatrix} x+
\begin{pmatrix}
-t& -2 \\ 2\beta_n& 0
\end{pmatrix}+\frac{1}{x}
\begin{pmatrix}
r_n-\alpha & -R_{n-1} \\ R_{n}\beta_n & -r_n
\end{pmatrix}
\right)
\begin{pmatrix} p_{n-1}(x) \\ p_n(x)
\end{pmatrix}.
\end{equation}
By replacing the vector $(p_{n-1},p_n)^{tr}$ by
$e^{x(x-t)/2}x^{-\alpha/2}(p_{n-1},p_n)^{tr},$ we get a matrix
similar to (\ref{IsoMatix}). Hence, we can calculate that
$$U=-2,\quad T=-t/2,\quad V=-R_{n-1},\quad Z=\theta_0+\theta_{\infty}-2\beta_n,$$
$$\beta_n=(2\theta_{\infty}-\alpha+2r_n)/4,\quad \theta_{\infty}=(\alpha+2n)/2.$$
Using (\ref{cond1}), (\ref{cond5})  and (\ref{cond7}), we get
$\theta_0^2=(\alpha/2)^2$. Hence, the parameters of the fourth
Painlev\'e equation are $A=\alpha+2n-1$ and $B=-2\alpha^2$. Note
that the fourth Painlev\'e equation is invariant with respect to
scaling: if $q(t)$ is a solution of $\PIV$ with parameters
$\alpha,\;\beta$ and $\lambda^4=1$, then $ \lambda^{-1}q(\lambda t)$
is a solution of $\PIV$ with parameters $\lambda^2 \alpha,\;\beta.$
Hence, the parameters are in agreement with (\ref{parameters}) (with
$n$ replaced by $n-1$).

\subsection{B\"acklund transformations}

In this section we show how to obtain the system
(\ref{eq:recurrence}) from a B{\"a}cklund transformation of the
fourth Painlev\'e equation. First we need a nonlinear relation for
$x_{n-1}$, $x_n$ and $x_{n+1}$. From the second equation of system
(\ref{eq:recurrence})  we get $$y_{n+1}=\frac{\sqrt{2}t x_n-2x_n^2
y_n-2}{2x_n^2}.$$ Using the first equation of this system for index
$n$ and $n+1$, we can eliminate $y_n$ by calculating the resultant
and obtain a nonlinear relation for $x_{n-1},$ $x_n$ and $x_{n+1}$.
Let us denote this cumbersome expression by $E$ for future
reference. Clearly, we can also find an expression between
$q_n,\;q_{n\pm1}$ by using (\ref{change}).

It is known that the fourth Painlev\'e equation $\PIV$ admits a
B{\"a}cklund transformation \cite{GrLSh}. If $q=q(z) $ is a solution
of $\PIV$ with parameters $A$ and $B$, then the function
$$\tilde{q}=T_{\varepsilon,\mu}q=\frac{q'-\mu q^2-2\mu z
q-\varepsilon \sqrt{-2B} }{2\mu q}$$ is a solution of $\PIV$ with
new values of the parameters
\begin{align*}
\tilde{A}=\frac{1}{4}\left(2\mu-2A+3\mu \varepsilon
\sqrt{-2B}\right), \qquad \tilde{B}=-\frac{1}{2}\left(1+A\mu
+\frac{1}{2}\varepsilon\sqrt{-2B}\right)^2,
\end{align*} where
$\varepsilon^2=\mu^2=1.$

\begin{remark}{\rm For our purposes it is sufficient to use the standard
B\"acklund transformations of the Painlev\'e transcendents which are
given in NIST Digital Library of Mathematical Functions  (DLMF
project)\footnote{http://dlmf.nist.gov/32.7}. There are also
algebraic aspects of the Painlev\'e equations. It is known
\cite{Noumi, NoumiYamada} that the B\"acklund transformations of the
fourth Painlev\'e equation form the affine Weyl group of $A_2^{(1)}
$ type. The interested reader can easily  re-formulate our
transformations within the framework of Noumi-Yamada's birational
representation of $\tilde{W}(A_2^{(1)})$.}
\end{remark}

One can verify directly that, for instance, the compositions
$T_{1,1}\circ T_{1,-1}\circ T_{1,1}$ and $T_{1,-1}\circ T_{1,1}\circ
T_{1,-1}$ give rise to the following transformations. Let $q=q_n(z)$
be a solution of $\PIV$ with (\ref{parameters}), then
$$q_{n+1}(z)=\frac{(2\alpha+2zq+q^2-q')(2\alpha-2zq-q^2+q')}{2q(q^2+2zq-q'-4-4n-2\alpha)}$$
is a solution of $\PIV$ with $A=3+2n+\alpha$ and $B=-2\alpha^2.$
Similarly,
$$q_{n-1}(z)=-\frac{(q'+q^2+2zq-2\alpha)(q'+q^2+2zq+2\alpha)}{2q(q'+q^2+2zq-2(2n+\alpha))}$$
is a solution of $\PIV$ with $A=2n+\alpha-1$ and $B=-2\alpha^2.$
After substituting expressions for $q_{n\pm 1} $ into the nonlinear
recurrence relation $E$, we indeed find that this is identically
zero. Hence, we have proved the following statement.
\begin{theorem}
The discrete system (\ref{eq:recurrence}) for the recurrence
coefficients of semi-classical Laguerre polynomials can be obtained
from a B{\"a}cklund transformation of the fourth Painlev\'e equation
$P_{\textup{IV}}$.
\end{theorem}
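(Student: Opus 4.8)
The plan is to exploit the parameter structure established in the previous subsections: for each fixed $n$ the function $q_n$, defined through $x_n=-\sqrt{2}/q_n$ and $t=2z$, solves $(P4)$ with $A=1+2n+\alpha$ and $B=-2\alpha^2$. Since $n\mapsto n+1$ shifts $A$ by $+2$ while leaving $B$ untouched, and $n\mapsto n-1$ shifts $A$ by $-2$, the discrete index acts on parameter space exactly as a lattice translation inside the affine Weyl group generated by the B\"acklund transformations of $(P4)$. The whole strategy is therefore to realize the shifts $q_n\mapsto q_{n\pm1}$ as compositions of the elementary transformations $T_{\varepsilon,\mu}$ and then to check that the resulting rational maps are forced to be consistent with the discrete system (\ref{eq:recurrence}).

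First I would eliminate the auxiliary variables from (\ref{eq:recurrence}). Using the second equation to express $y_{n+1}$ in terms of $y_n$ and $x_n$, and then combining the first equation taken at indices $n$ and $n+1$, one can eliminate $y_n$ by forming the resultant; this yields a single scalar three-point relation $E=E(x_{n-1},x_n,x_{n+1})=0$. Passing to the Painlev\'e variable through $x_n=-\sqrt{2}/q_n$ rewrites $E$ as a purely rational relation in $q_{n-1},q_n,q_{n+1}$ and $z$, which is the form in which we want to recover the discrete system from the transformations.

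Next I would pin down the correct compositions. A single $T_{\varepsilon,\mu}$ moves both $A$ and $B$, so no elementary transformation alone implements $n\mapsto n+1$; one needs a composition that advances $A$ by $+2$ yet returns to the line $B=-2\alpha^2$. Inserting $\sqrt{-2B}=2\alpha$ into the parameter formulas for $\tilde A$ and $\tilde B$ and tracking the three stages, a direct bookkeeping check confirms that $T_{1,1}\circ T_{1,-1}\circ T_{1,1}$ carries $(1+2n+\alpha,-2\alpha^2)$ to $(3+2n+\alpha,-2\alpha^2)$ and so realizes $q_n\mapsto q_{n+1}$, while $T_{1,-1}\circ T_{1,1}\circ T_{1,-1}$ realizes the inverse shift $q_n\mapsto q_{n-1}$. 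Composing the three elementary maps and using $(P4)$ to remove the second derivatives that arise along the way produces the explicit rational expressions for $q_{n+1}$ and $q_{n-1}$ in terms of $q_n=q$, $q'$ and $z$.

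The final and hardest step is the verification: substitute these expressions for $q_{n\pm1}$ into the rational relation obtained from $E$ and confirm that it collapses to zero identically. The obstacle here is computational rather than conceptual. After substitution one is left with a rational function of $q$, $q'$ and $z$ in which second derivatives, coming from differentiating the B\"acklund formulas, appear; these must be cleared by repeated use of $(P4)$, after which the numerator has to be simplified to the zero polynomial. This is a large but mechanical symbolic computation, best delegated to a computer algebra system, and its vanishing establishes that (\ref{eq:recurrence}) follows from the B\"acklund transformation of $(P4)$, completing the proof.
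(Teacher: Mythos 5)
Your proposal matches the paper's own argument essentially step for step: eliminating $y_n$ by a resultant to get the three-point relation $E$, identifying the same compositions $T_{1,1}\circ T_{1,-1}\circ T_{1,1}$ and $T_{1,-1}\circ T_{1,1}\circ T_{1,-1}$ as the shifts $n\mapsto n\pm 1$ via the parameter bookkeeping with $\sqrt{-2B}=2\alpha$, and verifying symbolically that $E$ vanishes identically after substituting the resulting rational expressions for $q_{n\pm1}$ and reducing derivatives with $(P4)$. This is correct and is the same route the authors take.
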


\subsection{Initial conditions of the recurrence coefficients and classical solutions of the fourth Painlev\'e equation}

It is known \cite{GrLSh} that the fourth Painlev\'e equation admits
classical solutions as follows. Let us take $n=0$, then $\PIV$ with
parameters (\ref{parameters}) for the function $q=q(z)$ has
solutions which satisfy the following Riccati equation
\begin{equation}\label{Riccati}q'+q^2+2zq-2\alpha=0.\end{equation} This equation
can further be reduced to the  Weber-Hermite equation. Using the
change of variables (\ref{change}) we can calculate that
$$x_0(t)=\frac{\sqrt{2}\mu_0}{t\mu_0-2\mu_1}$$ satisfies Eq.~(\ref{Riccati}). Here $\mu_k$ is the $k$th moment $\int
x^kd\mu(x),\;\;k\in\mathbb{N}_0=\mathbb{N}\cup\{0\}$. Moreover,
using the last equality in (\ref{y0}) we get the initial value
$y_0=-\alpha/2$ which coincides with the initial values given in
\cite{BV}. Thus, we have shown that the initial conditions
correspond to classical solutions of the fourth Painlev\'e equation.

The recurrence coefficients $a^2_n$ and $b_n$ can always be written
\cite{Chihara} as ratios of Hankel determinants containing the
moments of the orthogonality measure (see also in \cite{our}).
However, the explicit determinant formulas of classical solutions
for  all the Painlev\'e equations are  known. We refer the reader to
\cite{KajOhta, Murata, NoumiYamada, NoumiOkamoto} and the references
therein for a classification and explicit determinant formulas for
classical solutions (classical transcendental and rational) of the
fourth Painlev\'e equation.

\section{The Freud weight}

In this section, we will study the relation between the recurrence
coefficients of the semi-classical Laguerre weight and the Freud
weight, and show how they are related via the B\"acklund
transformation.

The Freud weight \cite{Freud, Magnus, VanAssche} is given by
\begin{equation*}
w_{\alpha}(x)=|x|^{2\alpha+1}\exp(-x^4+t
x^2),\;\;x\in\mathbb{R},\;\;\alpha>-1.
\end{equation*}
Let us consider the recurrence coefficients of orthonormal
polynomials $\{{\tilde p}_n\}$ with respect to the weight
$w_{\alpha}$.  We have
$$x {\tilde p}_n(x)=A_{n+1}{\tilde p}_{n+1}(x)+A_n {\tilde p}_{n-1}(x).$$ In case of the
monic polynomials $\{{\tilde P}_n\}$ the recurrence relation is
given by
$$x {\tilde P}_n(x)={\tilde P}_{n+1}(x)+A_n^2 {\tilde P}_{n-1}(x).$$ The recurrence
coefficients satisfy
\begin{equation}\label{F1}
4A_n^2(A_{n-1}^2+A_n^2+A_{n+1}^2-t/2)=n+(2\alpha+1)\Delta_n,\;\;
\Delta_n=(1-(-1)^n)/2,
\end{equation}
which is the first discrete Painlev\'e equation d$\PI$.  The
bilinear structure and exact (Casorati determinant) solutions of the
(extended) d$\PI$ equation are studied in \cite{OhtaKaj}.

Let us briefly show how the recurrence coefficients, when viewed as
functions of $t$, are related to the solutions of the fourth
Painlev\'e equation. The differential equation, which can be derived
similarly to the Toda system, is given by
\begin{equation}\label{F2}
\frac{d}{dt}A_n^2=A_n^2(A_{n+1}^2-A_{n-1}^2).
\end{equation}
For simplicity we introduce the notation $f_n(t)=A_n^2(t).$ From
(\ref{F1}), with $n$ and $n-1$, we can find $f_{n-1}$ and $f_{n-2}$.
From (\ref{F2}) we can find $f_{n+1}$. Substituting these
expressions into (\ref{F2}) with $n-1$ we get a second order
differential equation for the function $f_n(t)$. Introducing a new
independent variable $t=2z$ and changing $f(z)=-2f_{n}(t)$ we get
the fourth Painlev\'e equation for the function $f(z)$ with
parameters given by $$ A=-\frac{1}{2}(2+n+4\alpha),\qquad
B=-\frac{n^2}{2},$$ in case $n $ is even and
$$A=\frac{1}{2}-\frac{n}{2}+\alpha,\qquad B=-\frac{1}{2}(1+n+2\alpha)^2,$$
in case $n$ is odd.

For the semi-classical Laguerre weight from Theorem
\ref{thm:recurrence}
$$v_\alpha(x)=x^{\alpha} e^{-x^2+tx},\;\;x>0,\;\;\alpha>-1 ,$$ the
orthonormal polynomials $\{p_n^{\alpha}\}$ satisfy $$x
p_n^{\alpha}(x)=a_{n+1}^{\alpha} p_{n+1}^{\alpha}(x)+b_n^{\alpha }
p_n^{\alpha}(x)+a_n^{\alpha} p_{n-1}^{\alpha}(x). $$ Here we use the
notation in \cite{BV}. It is known \cite{Chihara} that the
polynomials for the Freud weight and the semi-classical Laguerre
weight are related by
$${\tilde p}_{2n}(x)=p_n^{\alpha}(x^2),\;\;{\tilde p}_{2n+1}(x)=x
p_n^{\alpha+1}(x^2)$$ and the following relations holds for the
recurrence coefficients:
\begin{equation}\label{rel1}
a_n^{\alpha}=A_{2n}A_{2n-1}, \qquad
b_n^{\alpha}=A_{2n}^2+A_{2n+1}^2,
\end{equation}
\begin{equation}\label{rel2}
a_n^{\alpha+1}=A_{2n}A_{2n+1}, \qquad
b_n^{\alpha+1}=A_{2n+2}^2+A_{2n+1}^2.
\end{equation}
Since both $A_{2n}^2$ and $b_n^{\alpha}$ in (\ref{rel1}) and
(\ref{rel2}) satisfy the fourth Painlev\'e equation, we are
interested to obtain a relation with the B\"acklund transformation
$T_{\varepsilon,\mu}.$ From (\ref{change}) we get that
\begin{equation}\label{**}q(z)=-2z+2b_n^{\alpha}(t),\qquad t=2z
\end{equation} satisfies the fourth
Painlev\'e equation with (\ref{parameters}). Let us consider case
(\ref{rel1}). We have
\begin{equation}
\label{***}b_n^{\alpha}(t)=A_{2n}^2(t)+A_{2n+1}^2(t)=f_{2n}(t)+f_{2n+1}(t).
\end{equation}
Denoting $$f_1(z)=-2f_{2n}(2z),\qquad f_2(z)=-2f_{2n+1}(2z)$$ we
have that $f_1(z)$ satisfies the fourth Painlev\'e equation with
parameters
$$A=-1-n-2\alpha,\qquad B=-2n^2.$$
The function $f_2(z)$ is also a solution of the fourth Painlev\'e
equation with parameters
$$A=-n+\alpha,\qquad B=-2(1+n+\alpha)^2$$ and
$$f_2(z)=T_{1,-1}f_1(z)=\frac{2n-2z f_1-f_1^2-f_1'}{2f_1}.$$
This, together with (\ref{**}) and (\ref{***}), implies that
\begin{align*}
q(z)&=-2z-f_1(z)-f_2(z)\\
&=\frac{ f_1'-f_1^2-2zf_1-2n}{2f_1}=T_{1,1}f_1(z)
\end{align*} satisfies the fourth Painlev\'e equation with
(\ref{parameters}).

By analogy, in case of (\ref{rel2}) we have that if $f_1(z)$
satisfies the fourth Painlev\'e equation with
$$A=-2-n-2\alpha,\qquad B=-2(n+1)^2,$$ the function
$$f_2(z)=T_{-1,1}f_1(z)=\frac{2+2n-2z f_1-f_1^2+f_1'}{2f_1}$$
satisfies the fourth Painlev\'e equation with
$$A=\alpha-n,\qquad B=-2(n+1+\alpha)^2,$$ then
$$q(z)=-2z-f_1(z)-f_2(z)=T_{-1,-1}f_1(z)$$ is a solution of the fourth Painlev\'e
equation with $$A=2+2n+\alpha,\qquad B=-2(\alpha+1)^2,$$ that is,
with parameters (\ref{parameters}) where $\alpha$ is replaced by
$\alpha+1$.

\section{Discussions}

The recurrence coefficients of semi-classical orthogonal polynomials
are often related to the solutions of Painlev\'e equations $\PII$ --
$\PVI$. Below we include a few examples of such a connection
indicating  the weight, the Painlev\'e equation and the relevant
reference:
\begin{itemize}
\item the weight $e^{x^3/3+tx}$ on $\{x:x^3<0\}$ and $\PII$ \cite{Magnus2};
\item  the weight $x^{\alpha}e^{-x}e^{-s/x}$ and $\PIII$ \cite{ChenIts};
\item   the discrete Charlier weight $w(k)=a^k/((\beta)_k k!),
\;a>0,$  and $\PIII$ (and $\PV$) \cite{GF+Walter2};
\item   $\PIV$ and the weights $|x-t|^{\rho}e^{-x^2}$ in \cite{Chen+Feigin}, $x^{\alpha}e^{-x^2+t
x},\;x>0,$ and  $|x|^{2\alpha+1}e^{-x^4+t x^2}$ (in this paper);
\item    the discrete Meixner weight $(\gamma)_k
c^k/(k!(\beta)_k),\;c,\beta,\gamma>0,$ and $\PV$
\cite{our,GF+Walter1};
\item $\PV$ and the weights $(1-\xi \theta(x-t))|x-t|^{\alpha}x^{\mu}
e^{-x},$ where $\theta$ is the Heaviside function in
\cite{Forrester},
$x^{\alpha}(1-x)^{\beta} e^{-t/x}$ in \cite{chen+dai},
$(1+x)^{\alpha}(1-x)^{\beta}e^{-t x},\;x\in (-1,1),$ in
\cite{BasorChenEhrhardt};
\item $\PVI$ and the weights
$x^{\alpha}(1-x)^{\beta}(A+B\theta(x-t)), x\in [0,1],$ in
\cite{chen+zhang}, $(1-x)^{\alpha}x^\beta (t-x)^{\gamma}, x\in
[-1,1],$ in \cite{Magnus2}; see also \cite{ForreW} for more examples
and applications in random matrix theory.
\end{itemize}

It is an interesting  problem to find new examples of weights
leading to the Painlev\'e equations and their higher order and
multivariable generalizations.

\section*{Acknowledgments}
Part of this work was carried out while GF was visiting K.U.Leuven
for one month. The financial support of K.U.Leuven, MIMUW at the
University of Warsaw and the hospitality of the Analysis section at
K.U.Leuven is gratefully acknowledged. GF is also partially
supported by Polish MNiSzW Grant N N201 397937. WVA is supported by
Belgian Interuniversity Attraction Pole P6/02, FWO grant G.0427.09
and K.U.Leuven Research Grant OT/08/033. LZ is supported by the
Belgian Interuniversity Attraction Pole P06/02.

\end{document}